\documentclass[letterpaper,10 pt,conference]{ieeeconf}  
\IEEEoverridecommandlockouts                              

\overrideIEEEmargins                                      

\usepackage{comment}

\setlength{\parskip}{0em} 
\usepackage{subfigure}

\usepackage{xcolor}
\usepackage{xparse}
\usepackage{mathrsfs}
\usepackage{graphics} 
\usepackage{amsmath} 
\usepackage{amssymb}  
\usepackage{amsthm}
\usepackage{enumitem} 
\usepackage{hyperref}%
\hypersetup{colorlinks=true,  linkcolor=blue,  breaklinks=true,  urlcolor=blue,  citecolor=blue}

\usepackage{soul}
\usepackage{url}

\usepackage[prependcaption, colorinlistoftodos]{todonotes}

\DeclareMathOperator*{\rank}{rank}




\newcommand\oprocendsymbol{\hbox{$\triangle$}}
\newcommand\oprocend{\relax\ifmmode\else\unskip\hfill\fi\oprocendsymbol}


\DeclareSymbolFont{bbold}{U}{bbold}{m}{n}
\DeclareSymbolFontAlphabet{\mathbbold}{bbold}




\newtheorem{theorem}{Theorem}
\newtheorem{remark}[theorem]{Remark}
\newtheorem{example}[theorem]{Example}

\newtheorem{definition}[theorem]{Definition}
\newtheorem{proposition}[theorem]{Proposition}

\newtheorem{problem}{Problem}

\newcommand {\be}{\begin{equation}}
\newcommand {\ee}{\end{equation}}


\newcommand{\im}{{\rm Im}}

\definecolor{personal}{rgb}{0,0.5,0.5}


\title{\LARGE\bf  Behaviors, trajectories and data: \\ A novel perspective on the design of unknown-input observers}

\author{Giorgia Disar\`o and Maria Elena Valcher
\thanks{G. Disar\`o and  M.E. Valcher are with the Dipartimento di Ingegneria dell'Informazione,
 Universit\`a di Padova,
    via Gradenigo 6B, 35131 Padova, Italy, e-mail:  \texttt{giorgia.disaro@phd.unipd.it, meme@dei.unipd.it}}
   }
  \date{}

\begin{document}
\maketitle

\begin{abstract}
The purpose of this paper is to propose a novel perspective, based on Willems' ``behavior theory", on the design of  an unknown-input observer for a given linear time-invariant discrete-time state-space model, with unknown disturbances affecting both the state and the output equations. The problem is first addressed assuming that the original system model is known, and later assuming that the model is unknown but historical data satisfying a certain assumption are available. 
In both cases, fundamental concepts in behavior theory, as the projection of a behavior, the inclusion of a behavior in another one, 
and the use of kernel and image representations, provide quite powerful tools to determine necessary and sufficient conditions for the 
existence of an unknown-input observer (UIO), as well as algorithms to design one of them, if it exists.
\end{abstract}

\medskip

\section{Introduction} \label{intro}

Control engineering problems 
have been traditionally approached from a model-based perspective, assuming that some approximated model   describing
the system dynamics is available, possibly after a preliminary identification phase.
 Among all possible representations, linear state-space models surely have been 
the most commonly used, due to the fact that, despite their simplicity,   they can  effectively 
retain  in a small neighbour of the  working conditions  the  essential system properties one needs to design appropriate control actions.
 However, when dealing with complex systems integrating components of different nature, 
  it is often difficult to a priori determine  the parameters
   characterizing the system model, or even the class of  models that better describe the system.
  The uncertainty on the model, or even on the model class, one 
 has to adopt in order to capture the system dynamics,
  has led over the years to the development of methodologies that formulate the problem solution based only on what one is able to observe, namely finite trajectories generated by the system or, equivalently, some raw data.

  The idea of defining a system as the set of the trajectories compatible with its governing laws is at the basis of the ``behavioral approach" to systems theory, 
  first proposed by J.C. Willems in the eighties  \cite{Willems86,Willems86-II} and subsequently explored in a series of successful papers (see \cite{wil91,Willems}, to cite a few). In Willems' perspective, a system {\em is} the set of its trajectories, i.e., the {\em behavior}.  
So, 
  trajectories come first and from trajectories one can deduce the most suitable representation.
 
  More recently, there has been a surge of interest in the so-called 
data-driven techniques, as opposed to the commonly adopted  model-based ones. 
Generally speaking, data-driven methods exploit collected data to design controllers, observers or to solve any kind of control problem. They split into two categories: indirect and direct methods. The former ones use the available data to derive a model representation, performing a preliminary system identification procedure, and then they apply standard model-based approaches to solve the problem. The latter ones, instead, avoid the model identification step and leverage only the collected data to obtain the problem solution. The direct approach has recently gained more attention since it has the possibility of outperforming the two-steps approach. 
Moreover, it has  the advantage of being simpler and computationally more efficient. 

The connection between behavior theory and data-driven methods is actually very strong, and not unexpectedly a large number of results obtained 
with data-driven methods \cite{DD_Behaviors,MarkRap, Maupong,Ferrari-Trecate, Henk24} rely on the so-called Willems' Fundamental Lemma (and the concept of persistence of excitation) \cite{WillemsPE}, and on its extensions \cite{WillemsExt,WillemsExt2}. 
Indeed, the absence of an underlying model representing the system is what makes behaviorial systems theory \cite{wil91,Willems86,Willems86-II} particularly suitable for use in the formulation of data-driven problems with direct approach. 
A system can be identified with the set of its trajectories, and, under certain conditions, it suffices a single  finite-time trajectory to capture the behavior. 
This is the key idea behind the development of data-driven methods: 
to design an experiment in such a way that the collected data   are ``sufficiently rich" to either identify  the system or to 
solve some specific control problem for it.

As previously mentioned, data-driven formulations based on behavioral systems theory have already been proposed to solve classical control engineering problems.
However, to the best of the authors' knowledge, the behavioral paradigm has not been considered yet to formulate and solve in a data-driven context the problem of designing an unknown-input observer (UIO) for a linear time-invariant (LTI) discrete-time system. 

 Various types of estimation problems have been investigated in the behavioral approach, 
 however the causality of  the obtained (unknown-input) observer is not always guaranteed \cite{MB-MEV-JCW,vw2,BehaviorObservers}. In particular, in \cite{MB-MEV-JCW} the general problem of designing an observer of some relevant variables from some measured variables in the presence of  unmeasured (and/or irrelevant) variables is investigated for linear left shift-invariant behaviors whose trajectories have support on $\mathbb Z_+$. 
 Necessary and sufficient conditions for the existence of a dead-beat observer are provided as well as a complete parametrization of all dead-beat observers. Then, equivalent conditions for the existence of a {\em causal} dead-beat observer are derived, which  can be particularized to address the state estimation problem for  state-space models. However, in
 general, they cannot be used to design a classic UIO described by a state-space model.
 
 In  this paper we address the UIO design problem by resorting again  to the behavioral approach, but following a   different perspective from the one adopted in \cite{MB-MEV-JCW,vw2,BehaviorObservers}, since we constrain the resulting UIO to be amenable of a classic state-space description.
 In the first part of the paper, by adopting for both the  state-space model representing the original system and the one representing the UIO a ``kernel description", and leveraging   the concepts of acceptor \cite{BehaviorObservers} 
 and of projection of a behavior on a subset of its variables, we derive novel necessary and sufficient conditions for the problem solvability, and relate them to the standard ones derived using state-space methods \cite{Darouach2,UIO-MEV}. 
Then, in the second part of the paper,   we exploit the obtained results to tackle the problem in a data-driven scenario. 

The   data-driven design of a UIO has  been investigated in \cite{TAC-UIO,ECC2024,Ferrari-Trecate}, and necessary and sufficient conditions for the problem solvability have been derived. However, all  previous works   considered a simpler system description compared to the  one adopted here,   as the unknown input was  not affecting  the output measurements. Moreover,
by resorting to kernel and image representations that are typical of the behavioral approach, 
we have been able to derive a different problem solution compared to those given in \cite{TAC-UIO,ECC2024,Ferrari-Trecate} that 
is more amenable to generalizations and also provides an easy algebraic algorithm
to design the matrices of a possible UIO, if it exists.
 


The paper is organized as follows. In Section \ref{problem} we formalize the problem we are going to address. In Section \ref{behavior_not} we provide some basic notions about behaviors.
We believe that such concepts are sufficient to make even an interested reader not familiar with behavior theory in a position to understand the subsequent analysis. 
In Section \ref{behavior_app} we apply behavioral systems theory to solve the problem of designing a UIO described by a   state-space model. The same problem is solved also in Section \ref{datadriven} by making use only of some collected data. An illustrative example is provided in Section \ref{example}. Finally, Section \ref{Concl} concludes the paper. 
\medskip

{\bf Notation.} 
Given two integers, $h \le k$, the symbol $[h,k]$ denotes the finite set $\{h, h+1, \dots, k\}$.
Given a matrix $M\in {\mathbb R}^{p \times m}$, 
 $[M]_{ij}$ denotes the $(i,j)$-th entry of $M$. By $e_i$ we denote the $i$-th canonical vector in ${\mathbb R}^n$ (with all entries equal to zero except for the $i$-th one which is unitary), $n$ being clear from the context.
The null and column spaces of $M$ are denoted by $\ker{(M)}$ and $\im (M)$, respectively. \\
Given a vector sequence $v =\{v(t)\}_{t\in {\mathbb Z}_+}$, taking values in $\mathbb{R}^n$, we use the notation $\{v(t)\}_{t=0}^{N}$, $N\in\mathbb Z _+$, to indicate the (finite) sequence of vectors $v(0),\dots,v(N)$.  \\
We denote by ${\mathbb R}[z]$ the ring of polynomials in the variable $z$ with real coefficients. 
Let $M(z)\in\mathbb R[z] ^{p\times q}$ be a polynomial matrix.  
The {\em rank} of $M(z)$ is the rank of the (complex-valued) matrix  $M(\lambda)$ 
 for every $\lambda \in {\mathbb C}$, except (possibly) for a finite number of values of $\lambda$.
We say that $M(z)$ is {\em left prime} \cite{Kailath}
if  $\rank{(M(\lambda))} = p$ for every $\lambda \in \mathbb C$.   

A square polynomial matrix $U(z)\in \mathbb R[z]^{p\times p}$ is {\em unimodular} if it is invertible and its inverse is a polynomial matrix, too. 
If $M(z)\in\mathbb R[z] ^{p\times q}$ has rank $r$, a polynomial matrix $H(z)$ is a {\em left annihilator} of $M(z)$ (see \cite{MB-MEV-JCW, Rocha, BehaviorsZ+}) if $H(z)M(z)=0$. Moreover, a left annihilator $H_m(z)$ is a {\em minimal left annihilator (MLA)} of $M(z)$ if it is of full row rank and for any other left annihilator $H(z)$ of $M(z)$ we have $H(z) = P(z)H_m(z)$, for some polynomial matrix $P(z)$. It can be proved that an MLA of $M(z)$  is a $(p-r)\times p$ left prime matrix and is uniquely determined modulo a unimodular left factor. 

Given a polynomial   vector $u(z)\in {\mathbb R}[z]^p$ by its {\em degree} we mean the highest of the degrees of its (polynomial) entries, i.e.,
$\max_j \deg (u_j(z)).$
A polynomial matrix $M(z)\in\mathbb R[z]^{p\times q}$ of rank $p$ with row degrees\footnote{The $i$-th {\em row degree} $k_i$ is the 
degree of $e_i^\top M(z)$,   the $i$-th row of $M(z)$.} $k_1,k_2,\dots,k_p$ is said to be {\em row-reduced} \cite{Kailath} if there exists at least one minor of order $p$ whose degree coincides with $\sum_{i=1}^{p}{k_i}$. When so,   $M(z)$   has the {\em predictable degree property} (see \cite[Theorem 6.3-13]{Kailath}), by this meaning that    for every $u(z)\in\mathbb R[z]^p$ it holds that 
$$
{\rm deg}(u(z)^\top M(z)) = \max_{j:u_j(z)\ne 0}{\{{\rm deg}(u_j(z))+k_j\}}.
$$
 \smallskip
 
\section{Problem formulation}\label{problem}

Consider a discrete-time LTI system, $\Sigma$, described by:
\begin{subequations}\label{system}
\begin{eqnarray}\label{system_1}
x(t+1) &=& Ax(t)+Bu(t)+Ed(t) \\ 
y(t) &=& Cx(t)+Du(t)+Fd(t), \label{system_2}
\end{eqnarray}
\end{subequations}
where $t\in\mathbb{Z}_+$, $x(t)\in \mathbb{R}^n$ is the state, $u(t)\in\mathbb{R}^m$ is the (known) control input, $y(t)\in\mathbb{R}^p$ is the output and $d(t)\in\mathbb{R}^r$ is the unknown input of the system, e.g., a disturbance. Without loss of generality, 
we assume that the matrix $\begin{bmatrix} E^\top & F^\top\end{bmatrix}^\top \in\mathbb{R}^{(n+p)\times r}$ is of full column rank
 $r$. Indeed, if  this is not the case, 
one can simply redefine the disturbance signal to enforce the full column rank assumption. \\
As a first step, we introduce the concepts of 
acceptor (first adopted in the  behavioral setting \cite{BehaviorObservers}, and later introduced also in a data-driven scenario   \cite{TAC-UIO,ECC2024}) and  of unknown input observer.
\smallskip

\begin{definition}\label{UIO}\cite{TAC-UIO}
An LTI system of the form 
\begin{subequations}\label{UIO.eq}
\begin{eqnarray}\label{UIO_eq1}
z(t+1) &=& A_{UIO}z(t)+B_{UIO}^u u(t)+B_{UIO}^y y(t) \\
\hat{x}(t) &=& z(t) +  D_{UIO}^u u(t) +  D_{UIO}^y y(t), \label{UIO_eq2}
\end{eqnarray}
\end{subequations}
where   $z(t)\in\mathbb R^n$ is the state and $\hat x(t)\in\mathbb R^n$  is the output, is said to be:
\begin{itemize}
\item  An {\em acceptor} for system $\Sigma$ in \eqref{system} if for every trajectory $(\{x(t)\}_{t\in\mathbb Z_+}, \{u(t)\}_{t\in\mathbb Z_+},\{y(t)\}_{t\in\mathbb Z_+},\{d(t)\}_{t\in\mathbb Z_+})$~that is a solution of \eqref{system}, there exists $\{z(t)\}_{t\in\mathbb Z_+}$ such that $(\{\hat x(t)\}_{t\in\mathbb Z_+}=\{x(t)\}_{t\in\mathbb Z_+},\{u(t)\}_{t\in\mathbb Z_+},\{y(t)\}_{t\in\mathbb Z_+},$ $\{z(t)\}_{t\in\mathbb Z_+})$ is a solution of \eqref{UIO.eq}.
\item An  {\em  unknown-input observer (UIO)} for the system in \eqref{system} if it is an acceptor and the state estimation error $e(t) \triangleq x(t) - \hat{x} (t)$ tends to 0 as $t\to +\infty$, for every choice of $x(0)$, $z(0)$,  of the input signal  $u(t), t\in {\mathbb Z}_+$, and     of the unknown input $d(t), t\in {\mathbb Z}_+$.
\end{itemize}
\end{definition}

The UIO design problem can be formalised as follows:

\begin{problem} \label{probl}
Given system $\Sigma$ described as in \eqref{system}, design (if possible) a UIO  for $\Sigma$, described by equations \eqref{UIO.eq}.
\end{problem}

Before addressing the behavioral approach to the problem solution,
we  recall some basics about   behavior theory. The interested reader is referred, e.g.,  to \cite{Willemsbook,Willems} for  details.
\medskip

\section{Some basic notions about behaviors} \label{behavior_not}

Let $(\mathbb R^{\texttt w})^{\mathbb Z_+}$ be the set of trajectories defined on $\mathbb Z_+$ and taking values in $\mathbb R^{\texttt w}$. The {\em left (backward) shift operator} on $(\mathbb R^{\texttt w})^{\mathbb Z_+}$ is defined as 
\begin{eqnarray*}
\sigma\!&:&\! (\mathbb R^{\texttt w})^{\mathbb Z_+}\to (\mathbb R^{\texttt w})^{\mathbb Z_+}\\
\!&:&\! (w(0),w(1),w(2),\dots)\mapsto (w(1),w(2),w(3),\dots),
\end{eqnarray*}
for any $w= \{w(t)\}_{t\in \mathbb Z_+}\in (\mathbb R^{\texttt w})^{\mathbb Z_+}$.
\begin{definition} \label{behavior} \cite{BehaviorsZ+, Willems86}
A {\em behavior} $\mathcal B\subseteq(\mathbb R^{\texttt w})^{\mathbb Z_+}$ is the linear and left shift invariant set of solutions $w = \{w(t)\}_{t\in\mathbb Z_+}$ of a system of difference equations 
$$
R_0  w(t) + R_1 w(t+1) +\dots + R_L w(t+L) = 0, \quad \forall\ t\in {\mathbb Z}_+,
$$
or, equivalently, in compact form, the set of solutions of
$
R(\sigma) w = 0,
$
where $R(z) \triangleq \sum_{i=0}^{L}{R_iz^i}$ belongs to $\mathbb R[z]^{p\times {\texttt w}}$, $p,  L\in \mathbb Z_+, p >0$. 
We adopt the standard shorthand notation $$\mathcal B \triangleq \ker{(R(\sigma))},$$
and refer to   the vector $w(t)$ as the vector of the system   variables (at time $t$).  
\end{definition}

Given a behavior $\mathcal B\subseteq(\mathbb R^{\texttt w})^{\mathbb Z_+}$ and a positive time   $T\in {\mathbb Z}_+$, 
   the symbol $\mathcal B\big|_{[0,T-1]}$   denotes the restriction of ${\mathcal B}$ to the time window $[0,T-1]$, namely
 the set of finite sequences $\{w_t\}_{t=0}^{T-1}$ such that there exists $w\in {\mathcal B}$ satisfying  $w(t)=w_t$ for every $t\in [0,T-1]$.

A behavior $\mathcal B\subseteq(\mathbb R^{\texttt w})^{\mathbb Z_+}$  is {\em autonomous} \cite{BehaviorsZ+,Wood-Zerz}
 if there exists $l \in {\mathbb Z}_+$ such that every trajectory $w$ in $\mathcal B$ is uniquely determined by its first $l$ samples 
 $\{w(t)\}_{t=0}^{l-1}$.
This is the case if and only if ${\mathcal B}$ is the kernel of a full column rank polynomial matrix $R(z)$. When so, it entails no loss of generality assuming that $R(z)$ is nonsingular square. An autonomous behavior 
$\mathcal B = \ker{(R(\sigma))}$ is {\em asymptotically stable} if all its trajectories converge to $0$ as $t$ goes to $+\infty$, and this is the case if and only if the greatest common divisor of the maximal order minors of $R(z)$ is a Schur polynomial.

\begin{definition} \label{projection} Given a behavior $\mathcal B\subseteq(\mathbb R^{\texttt w})^{\mathbb Z_+}$, 
assume that the vector of the  system variables $w(t)$ can be split into two blocks, say $w(t) = (w_1(t),w_2(t))\in \mathbb R^{\texttt w}$, where $w_1(t) \in \mathbb R^{\texttt w_1}$, $w_2(t) \in \mathbb R^{\texttt w_2}$, and ${\tt w}_1+{\tt w}_2={\tt w}$. 
The {\em projection of} $\mathcal B$ {\em on the   variables} $w_1$ is defined as 
$$\mathcal P_{w_1} \mathcal B \triangleq \{w_1 : \exists w_2 \in (\mathbb R^{\texttt w_2})^{\mathbb Z_+} {\rm \ such \ that \ } (w_1,w_2)\in \mathcal B\}.$$
\end{definition} 

If ${\mathcal B}$ is described as 
$[R_1(\sigma) \ R_2(\sigma) ] \begin{bmatrix}
w_1 \cr  w_2\end{bmatrix} =0,$
for  suitable polynomial matrices $R_1(z)$ and $R_2(z)$ (with the same number of rows), then \cite{MB-MEV-JCW}
$\mathcal P_{w_1} \mathcal B = {\rm ker} (M_2(\sigma) R_1(\sigma)),$ where $M_2(z)$ is an MLA of $R_2(z)$.
\medskip

\section{Behavioral approach to the design of a UIO} \label{behavior_app} 

Consider the state-space model $\Sigma$, and set $w = (x,u,y,d)$. The behavior associated to the system, $\mathcal B_{\Sigma}$, can be described as 
$
R_{\Sigma}(\sigma)w = 0,
$
or, equivalently, as
\be \label{system_B}
\mathcal B_{\Sigma} \triangleq \ker{(R_{\Sigma}(\sigma))},
\ee
where 
\be \label{R_sigma}
R_{\Sigma}(z) \triangleq 
\left[\begin{array}{cccc}
z I_n - A & -B & 0 & -E \\
-C & -D & I_p & -F 
\end{array}\right].
\ee
Since, by hypothesis,  $\begin{bmatrix}
E^\top & 
F^\top
\end{bmatrix}^\top$ is a real matrix of full column rank
$r$,
an MLA of $\begin{bmatrix}
E^\top & 
F^\top
\end{bmatrix}^\top$  is simply a full row rank constant matrix,
say 
$\begin{bmatrix} M_E & M_F
\end{bmatrix}
\in \mathbb R^{(n+p-r)\times (n+p)}$, such that 
${\rm ker}\begin{bmatrix} M_E & M_F
\end{bmatrix} = {\rm Im} \begin{bmatrix}
E^\top & 
F^\top
\end{bmatrix}^\top,$
and hence

{\small\begin{align}
 & {\mathcal P}_{(x,u,y)}{\mathcal B}_{\Sigma} = 
 \ker \left(\begin{bmatrix} M_E & M_F
\end{bmatrix} \begin{bmatrix}
\sigma I_n -   A  & -  B &0 \cr
- C & - D & I_p
\end{bmatrix}\right)
 \\
&= \ker \big(\begin{bmatrix}
\sigma M_E - M_E A -M_F C & -M_E B 
- M_F D & M_F
\end{bmatrix}\big). \nonumber
\end{align}}

Upon defining the  describing vector of system $\hat \Sigma$ in \eqref{UIO.eq} as $\hat w = (\hat x, u, y, z)$,   the associated behavior is: 
\be \label{UIO_B}
\mathcal B_{\hat \Sigma} \triangleq \ker{(R_{\hat \Sigma}(\sigma))},
\ee
where 
\be \label{R_sigma_hat}
R_{\hat \Sigma}(z) \triangleq \left[\begin{array}{cccc}
 0 & -B_{UIO}^u & -B_{UIO}^y & z I_n - A_{UIO} \\
I_n & -D_{UIO}^u & -D_{UIO}^y & -I_n 
\end{array}\right].
\ee
By premultiplying \eqref{R_sigma_hat} by  $ \left[\begin{array}{c|c} I_n & z I_n - A_{UIO} \end{array}\right] \in \mathbb R[z]^{n\times 2n}$, which is 
(left prime and) an MLA of $\begin{bmatrix}
z I_n -A_{UIO}^\top &
-I_n
\end{bmatrix}^\top$, we obtain the projection of $\mathcal B_{\hat \Sigma}$ on the 
 variables $(\hat x,u,y)$:

{\small
\begin{eqnarray}
\!\!\!\!\ &\!\!\!\!\!\!\mathcal P_{(\hat x,u,y)}\mathcal B_{\hat \Sigma} = \ker{\big(\big[\begin{array}{c|c}
\sigma I_n - A_{UIO}  & -B_{UIO}^u 
\end{array}} \label{proj_stim}\\
\!\!\!\!\ &\!\!\!\!\!\!\begin{array}{c|c}
- (\sigma I_n - A_{UIO}) D_{UIO}^u & -B_{UIO}^y - (\sigma I_n - A_{UIO})D_{UIO}^y
\end{array}\big]\big). \nonumber
\end{eqnarray}
}

The system in \eqref{UIO.eq} is  a UIO of $\Sigma$ if and only if the following two conditions are satisfied: 
\begin{enumerate}
\item 
\be \label{P_incl}
\mathcal P_{(x,u,y)}\mathcal B_{\Sigma} \  \subseteq \ \mathcal P_{(\hat x,u,y)}\mathcal B_{\hat \Sigma}.
\ee
\item If $(x,u,y) \in \mathcal P_{(x,u,y)}\mathcal B_{\Sigma}$ and $(\hat x,u,y)\in \mathcal P_{(\hat x,u,y)}\mathcal B_{\hat \Sigma}$, then $e = x-\hat x$ must be a trajectory of an autonomous asymptotically stable behavior, say $\mathcal B_e$.
\end{enumerate}
Note that    condition \eqref{P_incl} amounts to saying  that if $(x,u,y)$ is a trajectory of  $\mathcal P_{(x,u,y)}\mathcal B_{\Sigma}$, there always exists $z$ 
 such that $(x,u,y,z)$ belongs to
$\mathcal B_{\hat \Sigma}$
  and thus $(x, u, y)$ is   a trajectory of $\mathcal P_{(\hat x,u,y)}\mathcal B_{\hat \Sigma}$.
This means that $\hat \Sigma$ is an acceptor for the system in \eqref{system}.
We first address condition 1).  

\begin{proposition} \label{prop_acc}
$\hat \Sigma$ is an acceptor for $\Sigma$
(i.e, condition \eqref{P_incl} holds) 
if and only if the matrices $A_{UIO}, B_{UIO}^u, B_{UIO}^y, D_{UIO}^u$ and $D_{UIO}^y$ satisfy the following conditions:

{\small\begin{eqnarray}
\!\!\!\!\!&&\!\!\!\!\!\!\!\!\begin{bmatrix}
- D_{UIO}^y & A_{UIO}D_{UIO}^y-B_{UIO}^y
\end{bmatrix}
\begin{bmatrix}
CE & F \\
F & 0 
\end{bmatrix}
= 
\begin{bmatrix}
-E & 0
\end{bmatrix}\  \  \label{acc1}\\
\!\!\!\!\!&&\!\!\!\!\!\!\!\! A_{UIO} = A + \begin{bmatrix}
-D_{UIO}^y & A_{UIO}D_{UIO}^y-B_{UIO}^y
\end{bmatrix}
\begin{bmatrix}
CA \\
C
\end{bmatrix} \label{acc2}\\
\!\!\!\!\!&&\!\!\!\!\!\!\!\!\begin{bmatrix}
B_{UIO}^u &  D_{UIO}^u
\end{bmatrix}= 
\begin{bmatrix}
I - D_{UIO}^yC &  - B_{UIO}^y\cr
0 & - D_{UIO}^y
\end{bmatrix} \begin{bmatrix}
B \\
D
\end{bmatrix}. \  \label{acc3}
\end{eqnarray}}
\end{proposition}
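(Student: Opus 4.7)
The plan is to unwind condition \eqref{P_incl} via Definition \ref{UIO}: $\hat\Sigma$ is an acceptor for $\Sigma$ iff for every trajectory $(x,u,y,d)\in\mathcal B_\Sigma$ there exists $z$ such that $(\hat x=x,u,y,z)\in\mathcal B_{\hat\Sigma}$. Setting $\hat x=x$ in \eqref{UIO_eq2} forces
\[
z(t)=x(t)-D_{UIO}^u u(t)-D_{UIO}^y y(t),
\]
so the acceptor property reduces to requiring that this specific $z$ automatically satisfies the state equation \eqref{UIO_eq1} along every trajectory of $\Sigma$.

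Next, I would substitute this expression into \eqref{UIO_eq1} and eliminate $x(t+1)$, $y(t)$ and $y(t+1)$ through \eqref{system_1}-\eqref{system_2}. A short rearrangement produces a single identity
\begin{align*}
\Phi_x\,x(t)+\Phi_u\,u(t)+\Phi_{u'}\,u(t+1)+\Phi_d\,d(t)+\Phi_{d'}\,d(t+1)=0
\end{align*}
that must hold for every $t\in\mathbb Z_+$. Since $x(0)$ is free and $u$, $d$ are unconstrained input signals, the tuple $(x(t),u(t),u(t+1),d(t),d(t+1))$ ranges over all of $\mathbb R^{n+2m+2r}$, so \eqref{P_incl} is equivalent to the vanishing of each coefficient matrix $\Phi_\bullet$.

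Matching these five equations to \eqref{acc1}-\eqref{acc3} is then purely algebraic. The ``trailing-step'' identities $\Phi_{u'}=0$ and $\Phi_{d'}=0$ read $D_{UIO}^u=-D_{UIO}^y D$ and $D_{UIO}^y F=0$, which respectively supply the last columns of \eqref{acc3} and \eqref{acc1}. Substituting these back, $\Phi_x=0$ can be regrouped so as to expose the common block $[-D_{UIO}^y\ \ A_{UIO}D_{UIO}^y-B_{UIO}^y]$ and becomes exactly \eqref{acc2}; $\Phi_u=0$ rewrites as $B_{UIO}^u=(I-D_{UIO}^y C)B-B_{UIO}^y D$, completing \eqref{acc3}; and $\Phi_d=0$ yields the leading column of \eqref{acc1}. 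The converse is obtained by running the same substitutions backwards: if \eqref{acc1}-\eqref{acc3} hold, then along every $(x,u,y,d)\in\mathcal B_\Sigma$ the candidate $z(t)=x(t)-D_{UIO}^u u(t)-D_{UIO}^y y(t)$ satisfies \eqref{UIO_eq1}, which establishes the acceptor property.

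I expect the only real difficulty to be bookkeeping, namely recognising that the $x(t)$- and $d(t)$-coefficients share the common factor $[-D_{UIO}^y\ \ A_{UIO}D_{UIO}^y-B_{UIO}^y]$ that makes \eqref{acc1} and \eqref{acc2} assume their compact form. A more ``behavioural'' alternative would start from the kernel representations of $\mathcal P_{(x,u,y)}\mathcal B_\Sigma$ and $\mathcal P_{(\hat x,u,y)}\mathcal B_{\hat\Sigma}$ already computed above and use that $\ker R_1(\sigma)\subseteq\ker R_2(\sigma)$ is equivalent to $R_2(z)=P(z)R_1(z)$ for some polynomial $P(z)$; pinning down this (necessarily constant) factor $P(z)$, however, boils down to the same coefficient matching, so the direct substitution is the shortest route.
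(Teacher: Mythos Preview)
Your main argument is correct and takes a genuinely different route from the paper's. You exploit the special structure of \eqref{UIO_eq2}: since the coefficient of $z$ is $I_n$, setting $\hat x=x$ pins down $z$ uniquely, so the existential quantifier in the acceptor definition collapses and one is left with a single algebraic identity in $(x(t),u(t),u(t+1),d(t),d(t+1))$. Because these variables can be chosen freely at $t=0$, equating coefficients is legitimate and yields \eqref{acc1}--\eqref{acc3} directly. This is elementary, short, and avoids any polynomial-matrix machinery.

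The paper instead works at the level of kernel representations: it invokes the Completion Lemma from \cite{Willems86} to translate the inclusion \eqref{P_incl} into the existence of a polynomial factor $T(z)$ satisfying \eqref{proj_incl}, and then uses that the matrix $G(z)$ is row-reduced (predictable degree property) to force $\deg T(z)\le 1$. Expanding $T(z)=T_0+T_1z$ produces the intermediate relations \eqref{A}--\eqref{I}, which are then regrouped into \eqref{acc1}--\eqref{acc3}. This approach is heavier but yields the factor $T(z)$ explicitly, which the paper reuses in Theorem~\ref{CNS_UIO}~(ii)--(iii) to characterise the \emph{existence} of a UIO; your direct route does not produce such an object.

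One small slip in your closing remark: in the behavioural alternative you sketch, the factor $P(z)$ relating the two kernel representations is \emph{not} constant but of degree one (exactly the paper's $T(z)=T_0+T_1z$). The kernel description of $\mathcal P_{(x,u,y)}\mathcal B_\Sigma$ has $n+p-r$ rows while that of $\mathcal P_{(\hat x,u,y)}\mathcal B_{\hat\Sigma}$ has $n$ rows, both of degree one, so a constant $P$ cannot work in general. This does not affect your main proof, but it does mean that the ``behavioural'' route involves a nontrivial degree-bounding step (handled in the paper via row-reducedness) rather than being a mere repackaging of the same coefficient matching.
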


\begin{proof} 
The inclusion in \eqref{P_incl}  holds if and only if
(see the Completion Lemma in \cite{Willems86}) there exists $T(z)\in \mathbb R[z]^{n\times (n+p-r)}$ such that
\begin{eqnarray} \label{proj_incl}
\!\!\!\!\!\!&&\!\!\!\!\!\big[\begin{array}{c|c|c} 
z I_n - A_{UIO}  & -B_{UIO}^u  - (z I_n - A_{UIO}) D_{UIO}^u & -B_{UIO}^y \end{array} \nonumber\\
\!\!\!\!\!\!&&\!\!\!\!\!\begin{array}{c}- (z I_n - A_{UIO})D_{UIO}^y 
\end{array}\big] = T(z) \cdot  \begin{bmatrix} M_E & M_F
\end{bmatrix}\\
\!\!\!\!\!\!&&\!\!\!\!\! \cdot \ \left[\begin{array}{c|c|c}
z I_n - A & -B & 0  \\
\hline
-C & -D & I_p 
\end{array}\right], \nonumber
\end{eqnarray}
which is equivalent, block by block, to
  the following conditions:
\begin{subequations} \label{T_eq}
\begin{eqnarray}
\!\!\!\!\!\!&&\!\!\!\!\!\!\!\!\!\!\!\!\!\!\!\!\!\!zI_n- A_{UIO} \!=\! T(z)[M_E(zI_n-A)-M_F C] \label{T1}\\
\!\!\!\!\!\!&&\!\!\!\!\!\!\!\!\!\!\!\!\!\!\!\!\!\! B_{UIO}^u + (zI_n-A_{UIO})D_{UIO}^u \!=\! T(z)(M_E B+ M_F D) \label{T2} \\
\!\!\!\!\!\!&&\!\!\!\!\!\!\!\!\!\!\!\!\!\!\!\!\!\! B_{UIO}^y+(zI_n-A_{UIO})D_{UIO}^y \!=\! - T(z)M_F. \label{T3}
\end{eqnarray}
\end{subequations}
In order to solve the previous equations, we want 
to preliminarily  determine the degree of $T(z)$.
The right hand side of \eqref{proj_incl} can   be written as 
$
S(z) G(z),
$
where $S(z) \triangleq T(z) \left[\begin{array}{c|c} M_E & M_F \end{array}\right]$, while the polynomial matrix 
$$G(z)\triangleq\left[\begin{array}{c|c|c}
z I_n - A & -B & 0  \\
\hline
-C & -D & I_p 
\end{array}\right]$$ 
is row-reduced, with row degrees $k_j \in \{0,1\}$,  $j\in[1,n+p]$, 
 and hence has the predictable degree property. This ensures that, for every $i\in[1,n]$,
 
{\small \begin{eqnarray} \label{pred_deg}
&&\!\!\!\!\!\!\!\!\!\!\!1 = {\rm deg}\Big(e_i^\top\big[\begin{array}{c|c|} 
z I_n - A_{UIO}  & -B_{UIO}^u  - (z I_n - A_{UIO}) D_{UIO}^u  \end{array} \nonumber\\
&&\!\!\!\!\!\!\!\!\!\!\!\!\!\!\!\!\!\!\!\!\!\begin{array}{c}\ \ \ \ -B_{UIO}^y- (z I_n - A_{UIO})D_{UIO}^y 
\end{array}\big] \Big) = \\
&&\!\!\!\!\!\!\!\!\!\!\! =\!\!\! \max_{j\in[1,n+p]: \atop
[S(z)]_{ij} \ne 0}{\Big\{{\rm deg}\left( [S(z)]_{ij}\right) +k_j \Big\}}. \nonumber
\end{eqnarray}}

\noindent It follows that ${\rm deg}\left([S(z)]_{ij}\right) \le 1$, for every $ i\in[1,n]$ and $j\in[1,n+p]$,
 namely $S(z)= T(z)\left[\begin{array}{c|c} M_E & M_F \end{array}\right]$ is a polynomial matrix of degree (at most) $1$, and since 
$\left[\begin{array}{c|c} M_E & M_F \end{array}\right]$ is of full row rank, this implies that $T(z)= T_0+T_1z$ with $T_0, T_1 \in\mathbb R^{n\times (n+p-r)}$. \\
Consequently, the conditions in \eqref{T_eq} are equivalent to the following system of equations: 
\begin{subequations} \label{conditions}
\begin{eqnarray}
\!\!\!\!\!\!\!\!\!\!\!\!\!\!\!&&\!\!\!\!\!\!\!\!\!\! T_0M_E = I-D_{UIO}^yC \label{A}\\
\!\!\!\!\!\!\!\!\!\!\!\!\!\!\!&&\!\!\!\!\!\!\!\!\!\! T_0M_F = A_{UIO}D_{UIO}^y - B_{UIO}^y \label{B} \\
\!\!\!\!\!\!\!\!\!\!\!\!\!\!\!&&\!\!\!\!\!\!\!\!\!\! T_1M_E = 0 \label{C} \\
\!\!\!\!\!\!\!\!\!\!\!\!\!\!\!&&\!\!\!\!\!\!\!\!\!\! T_1M_F = -D_{UIO}^y \label{D} \\
\!\!\!\!\!\!\!\!\!\!\!\!\!\!\!&&\!\!\!\!\!\!\!\!\!\! A_{UIO} = T_0M_EA + T_0M_FC \nonumber \\
\!\!\!\!\!\!\!\!\!\!\!\!\!\!\!&&\!\!\!\!\!\!\!\!\!\! \ \ \ \ \ \ \ \ = (I-D_{UIO}^yC) A + (A_{UIO}D_{UIO}^y-B_{UIO}^y)C \label{E} 
\end{eqnarray}
\begin{eqnarray}
\!\!\!\!\!\!\!\!\!\!\!\!\!\!\!&&\!\!\!\!\!\!\!\!\!\! B_{UIO}^u = T_0M_EB+A_{UIO}D_{UIO}^u+T_0M_FD \nonumber \\
\!\!\!\!\!\!\!\!\!\!\!\!\!\!\!&&\!\!\!\!\!\!\!\!\!\! \ \ \ \ \ \ \ \ =(I-D_{UIO}^yC)B + A_{UIO}D_{UIO}^u \nonumber \\
\!\!\!\!\!\!\!\!\!\!\!\!\!\!\!&&\!\!\!\!\!\!\!\!\!\! \ \ \ \ \ \ \ \ + (A_{UIO}D_{UIO}^y- B_{UIO}^y) D \label{F}  \\
\!\!\!\!\!\!\!\!\!\!\!\!\!\!\!&&\!\!\!\!\!\!\!\!\!\!D_{UIO}^u  = T_1M_EB+T_1M_FD =-D_{UIO}^yD. \label{G}
\end{eqnarray}
Moreover, since $ \left[\begin{array}{c|c} M_E & M_F \end{array}\right]$ is an MLA of $\begin{bmatrix}
E^\top & 
F^\top
\end{bmatrix}^\top$, it follows that 
\begin{eqnarray}
\!\!\!\!\!\!\!\!\!\!\!\!\!\!\!\!\!\!\!\!\!\!\!0 \!\!\!\!\!\!&=&\!\!\!\!\!\! T_0M_E E +T_0M_F F \nonumber \\
\!\!\!\!\!\!\!\!\!\!\!\!\!\!\!\!&\stackrel{{\small \eqref{A}-\eqref{B}}}{=}&\!\!\!\!\!\!(I-D_{UIO}^yC)E+(A_{UIO}D_{UIO}^y-B_{UIO}^y)F \label{H} \\ 
\!\!\!\!\!\!\!\!\!\!\!\!\!\!\!\!\!\!\!\!\!\!\!0 \!\!\!\!\!\!&=&\!\!\!\!\!\! T_1M_E E +T_1M_F F \nonumber \\
\!\!\!\!\!\!\!\!\!\!\!\!\!\!\!\!&\stackrel{{\small \eqref{C}-\eqref{D}}}{=}&\!\!\!\!\!\! -D_{UIO}^y F. \label{I}
\end{eqnarray}
\end{subequations}
By re-ordering equations \eqref{E}$\div$\eqref{I},  we obtain equations  \eqref{acc1}$\div$\eqref{acc3}.
 \end{proof}
\medskip

Proposition \ref{prop_acc}
 shows that   system $\hat \Sigma$ described by equations \eqref{UIO.eq} is an acceptor for   system $\Sigma$  in \eqref{system}
if and only if conditions \eqref{E}$\div$\eqref{I}  hold. 
It remains, now,  to impose condition 2), namely that $\hat \Sigma$ asymptotically tracks the state of $\Sigma$.  

\begin{proposition} \label{prop_uio}
An acceptor $\hat \Sigma$ for $\Sigma$ is a UIO (namely satisfies condition 2)) if and only if $A_{UIO}$ is Schur.
\end{proposition}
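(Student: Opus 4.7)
The plan is to derive an explicit recursion for the state estimation error $e(t) = x(t)-\hat x(t)$ using the acceptor conditions \eqref{acc1}--\eqref{acc3}, and show it reduces to $e(t+1)=A_{UIO}e(t)$. Once this is established, the autonomy of the error dynamics is automatic, and asymptotic stability is equivalent (by the standard result recalled at the end of Section \ref{behavior_not}) to $A_{UIO}$ being Schur.

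First, I would write
$$
\hat x(t) = z(t) + D_{UIO}^u u(t) + D_{UIO}^y y(t),\qquad z(t) = \hat x(t) - D_{UIO}^u u(t) - D_{UIO}^y y(t),
$$
and compute
$$
e(t+1) = x(t+1) - z(t+1) - D_{UIO}^u u(t+1) - D_{UIO}^y y(t+1).
$$
Substituting $x(t+1)=Ax(t)+Bu(t)+Ed(t)$ and \eqref{UIO_eq1}, and replacing $z(t)$ by $x(t)-e(t)-D_{UIO}^u u(t) - D_{UIO}^y y(t)$, yields an expression of the form
$$
e(t+1) = A_{UIO}\,e(t) + \Delta_x\,x(t) + \Delta_u\,u(t) + \Delta_d\,d(t) + \Delta_{u'}\,u(t+1) + \Delta_{d'}\,d(t+1),
$$
after also plugging $y(t)=Cx(t)+Du(t)+Fd(t)$ and the analogous expression for $y(t+1)$.

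Next, I would show that each of the matrix coefficients $\Delta_x,\Delta_u,\Delta_d,\Delta_{u'},\Delta_{d'}$ vanishes as a direct consequence of \eqref{acc1}--\eqref{acc3}. Concretely: $\Delta_x = 0$ is precisely \eqref{acc2}; the relations $B_{UIO}^u = (I-D_{UIO}^yC)B - B_{UIO}^y D$ and $D_{UIO}^u = -D_{UIO}^y D$ from \eqref{acc3} kill $\Delta_u$ and $\Delta_{u'}$; and the two block equations contained in \eqref{acc1}, namely $E - D_{UIO}^y CE + (A_{UIO}D_{UIO}^y-B_{UIO}^y)F = 0$ and $D_{UIO}^y F = 0$, respectively eliminate $\Delta_d$ and $\Delta_{d'}$. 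I expect this to be the only delicate bookkeeping step, since one has to be careful with the $y(t+1)$ substitution that propagates $d(t+1)$ into the error equation through the $D_{UIO}^y$ term.

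With $e(t+1) = A_{UIO} e(t)$ established for every choice of $x(0)$, $z(0)$, $u(\cdot)$ and $d(\cdot)$, the behavior $\mathcal B_e$ generated by $e$ is the autonomous kernel behavior $\ker(\sigma I_n - A_{UIO})$. Hence $\mathcal B_e$ is asymptotically stable if and only if $\det(zI_n-A_{UIO})$ is a Schur polynomial, i.e., if and only if $A_{UIO}$ is Schur. Since $x(0)$ and $z(0)$ can be chosen so that $e(0)$ equals any prescribed vector in $\mathbb R^n$, no weaker stability condition (e.g., on an invariant subspace) can suffice, completing both directions of the equivalence.
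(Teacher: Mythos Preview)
Your proof is correct, but it takes a different route from the paper's.  The paper exploits the behavioral set-up directly: since $\hat\Sigma$ is an acceptor, any trajectory $(x,u,y)\in\mathcal P_{(x,u,y)}\mathcal B_{\Sigma}$ also lies in $\mathcal P_{(\hat x,u,y)}\mathcal B_{\hat\Sigma}$, and so does $(\hat x,u,y)$; by linearity of this projected behavior, the difference $(e,0,0)$ belongs to it as well, and reading off the kernel description \eqref{proj_stim} at $u=0$, $y=0$ gives $(\sigma I_n-A_{UIO})e=0$ in one line.  You instead carry out an explicit state-space computation of $e(t+1)$, substitute the system and observer equations, and verify term by term that \eqref{acc1}--\eqref{acc3} annihilate the coefficients of $x(t)$, $u(t)$, $d(t)$, $u(t+1)$, $d(t+1)$.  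Both arguments arrive at the same autonomous recursion $e(t+1)=A_{UIO}e(t)$ and then conclude identically; the paper's argument is shorter and in keeping with the behavioral theme (no need to unpack \eqref{acc1}--\eqref{acc3}), while yours is more elementary, self-contained, and makes the role of each acceptor identity explicit.  Your final remark that $e(0)$ can be made arbitrary is a nice touch that the paper leaves implicit.
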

\begin{proof}
 Condition 2) amounts to saying that
for every $(x,u,y)\in \mathcal P_{(x,u,y)}\mathcal B_{\Sigma}$ and every $(\hat x, u,y)\in  \mathcal P_{(\hat x,u,y)}\mathcal B_{\hat \Sigma}$, the error signal  $e = x-\hat x$ converges to zero asymptotically.
As $\hat \Sigma$ is an acceptor,  $(x,u,y)\in     \mathcal P_{(\hat x,u,y)}\mathcal B_{\hat \Sigma}$, in turn, and hence, by linearity, it follows that 
$(e,0,0)= (x,u,y)-(\hat x, u,y)\in   \mathcal P_{(\hat x,u,y)}\mathcal B_{\hat \Sigma}$,
which means that
$e \in \ker{\left(
\sigma I - A_{UIO}
\right)}. $
Consequently, 
$\mathcal B_e  \triangleq  \ker{\left(
\sigma I - A_{UIO}
\right)}$
 must be  autonomous and asymptotically stable, i.e.,    $A_{UIO}$ must be  Schur stable.
\end{proof}
\smallskip

Putting together Propositions \ref{prop_acc} and \ref{prop_uio}, we obtain the following result.

\begin{theorem}\label{IS_UIO}
The following facts are equivalent. 
\begin{itemize}
\item[(i)] $\hat \Sigma$ is  a UIO 
for   $\Sigma$.
\item[(ii)] The following two conditions are satisfied: 
\begin{itemize}
\item[a)] $\mathcal P_{(x,u,y)}\mathcal B_{\Sigma} \  \subseteq \ \mathcal P_{(\hat x,u,y)}\mathcal B_{\hat \Sigma}$, and 
\item[b)] $\mathcal B_e$ is an autonomous asymptotically stable behavior. 
\end{itemize}
\item[(iii)] The 
matrices 
 $A_{UIO}, B_{UIO}^u, B_{UIO}^y, D_{UIO}^u$ and $D_{UIO}^y$ satisfy \eqref{acc1}$\div$\eqref{acc3}, and $A_{UIO}$ is Schur stable.
\end{itemize}
\end{theorem}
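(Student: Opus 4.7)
The plan is to obtain this three-way equivalence by stringing together the two propositions just proved, together with the behavioral reformulation of Definition \ref{UIO} that was laid out in the discussion preceding Proposition \ref{prop_acc}. Since both propositions have already done the real work, I expect the proof to be essentially a bookkeeping exercise and to be quite short.

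For (i) $\Leftrightarrow$ (ii) I would start directly from Definition \ref{UIO}, which splits "UIO" into two requirements: the acceptor property, and asymptotic convergence of $e = x - \hat x$ to zero for every admissible choice of $x(0)$, $z(0)$, $u$ and $d$. The acceptor property was already translated, in the paragraph following \eqref{proj_stim}, into the projection inclusion \eqref{P_incl}, which is exactly (ii)(a). For the error condition I would invoke the argument already used in the proof of Proposition \ref{prop_uio}: once (ii)(a) holds, every trajectory $(x,u,y)$ of $\mathcal P_{(x,u,y)}\mathcal B_{\Sigma}$ also lies in $\mathcal P_{(\hat x,u,y)}\mathcal B_{\hat \Sigma}$, so by linearity of $\mathcal B_{\hat \Sigma}$ the error triple $(e,0,0) = (x,u,y) - (\hat x,u,y)$ also belongs to $\mathcal P_{(\hat x,u,y)}\mathcal B_{\hat \Sigma}$. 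This forces $e \in \ker(\sigma I - A_{UIO}) =: \mathcal B_e$. Convergence to zero of every such $e$, for every initial condition, is then equivalent to $\mathcal B_e$ being autonomous and asymptotically stable, which is (ii)(b).

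For (ii) $\Leftrightarrow$ (iii) the work is already done: Proposition \ref{prop_acc} gives (ii)(a) $\Leftrightarrow$ the matrix identities \eqref{acc1}--\eqref{acc3}, and Proposition \ref{prop_uio} gives, under the standing assumption that $\hat \Sigma$ is an acceptor, that (ii)(b) $\Leftrightarrow$ $A_{UIO}$ Schur. Conjoining the two equivalences produces (iii).

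The only subtle point worth flagging, rather than a real obstacle, is that the equivalence between (ii)(b) and "$A_{UIO}$ Schur" is conditional on (ii)(a): the identification $\mathcal B_e = \ker(\sigma I - A_{UIO})$ used in Proposition \ref{prop_uio} relies on the acceptor property to guarantee, for each $(x,u,y)\in\mathcal P_{(x,u,y)}\mathcal B_{\Sigma}$, the existence of a matching $z$ in the UIO description. For this reason I would organize the argument as (i) $\Leftrightarrow$ (ii) and then (ii) $\Leftrightarrow$ (iii), rather than attempting to cross-match individual halves of (ii) directly with individual conditions in (iii).
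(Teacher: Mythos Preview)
Your proposal is correct and mirrors the paper's approach exactly: the paper does not give a separate proof of Theorem \ref{IS_UIO} at all, but simply states that it follows by ``putting together Propositions \ref{prop_acc} and \ref{prop_uio}'', which is precisely the bookkeeping you describe. Your observation that the equivalence (ii)(b) $\Leftrightarrow$ ``$A_{UIO}$ Schur'' is conditional on the acceptor property is a valid clarification that the paper leaves implicit.
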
 

The previous result allows us to identify under what conditions a system described as in 
\eqref{UIO.eq} 
is a UIO for $\Sigma$. Now we want to determine under what  conditions on $\Sigma$ a UIO $\hat \Sigma$ exists.
We have the following result.

\begin{theorem}\label{CNS_UIO}
The following facts are equivalent. 
\begin{itemize}
\item[(i)] There exists    a UIO $\hat \Sigma$ of the form \eqref{UIO.eq} 
for   $\Sigma$.
\item[(ii)] There exists $T(z) = T_0 + T_1 z\in {\mathbb R}[z]^{n \times (n+p-r)}$ such that
equation \eqref{proj_incl} holds for some real matrices $A_{UIO}, B_{UIO}^u, B_{UIO}^y, D_{UIO}^u$ and $D_{UIO}^y$,
with $A_{UIO}$ Schur stable.
\item[(iii)] There exists $T(z) = T_0 + T_1 z\in {\mathbb R}[z]^{n \times (n+p-r)}$ such that
\be
T(z) \begin{bmatrix} M_E & M_F\end{bmatrix} \begin{bmatrix}
zI_n-A  \\
- C  
\end{bmatrix}  =  zI_n - A_{UIO},
\label{intermediate}
\ee
for some Schur stable matrix $A_{UIO}$.
\item[(iv)] There exists $S(z) = S_0 + S_1 z\in {\mathbb R}[z]^{n \times (n+p)}$ such that 
\be
S(z) \begin{bmatrix}
zI_n-A & -E \\
- C & - F 
\end{bmatrix}  = \begin{bmatrix} zI_n - A_{UIO} &0\end{bmatrix},
\ee
for some Schur stable matrix $A_{UIO}$.
\item[(v)] There exist
matrices 
 $A_{UIO}, B_{UIO}^u, B_{UIO}^y, D_{UIO}^u$ and $D_{UIO}^y$, with  $A_{UIO}$   Schur stable, satisfying \eqref{acc1}$\div$\eqref{acc3}.
\end{itemize}
\end{theorem}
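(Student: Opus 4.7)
My plan is to prove the chain (i)$\Leftrightarrow$(v) via Theorem \ref{IS_UIO}, (v)$\Leftrightarrow$(ii) via the derivation already carried out in the proof of Proposition \ref{prop_acc}, and then to handle the ``projected'' equivalences (ii)$\Leftrightarrow$(iii)$\Leftrightarrow$(iv) by a combination of block-column extraction and the MLA property of $\begin{bmatrix} M_E & M_F\end{bmatrix}$.

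The equivalence (i)$\Leftrightarrow$(v) is an immediate restatement of Theorem \ref{IS_UIO}. For (v)$\Leftrightarrow$(ii) I would invoke the analysis in the proof of Proposition \ref{prop_acc}, where it was established that \eqref{proj_incl} is solvable with $T(z) = T_0 + T_1 z$ and real UIO matrices if and only if \eqref{acc1}$\div$\eqref{acc3} hold. The Schur-stability requirement on $A_{UIO}$ is independent of the acceptor conditions and transfers verbatim between the two formulations.

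For (ii)$\Rightarrow$(iii) I would simply extract the first block column of \eqref{proj_incl}, which is exactly \eqref{intermediate}. Conversely, starting from (iii), I would complete the UIO data by matching coefficients in the remaining two block columns of \eqref{proj_incl}: define $D_{UIO}^y := -T_1 M_F$, $B_{UIO}^y := A_{UIO} D_{UIO}^y - T_0 M_F$, $D_{UIO}^u := T_1(M_E B + M_F D)$, and $B_{UIO}^u := A_{UIO} D_{UIO}^u + T_0(M_E B + M_F D)$; a direct substitution then shows that \eqref{proj_incl} holds in full, hence (ii) is satisfied.

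For (iii)$\Leftrightarrow$(iv), I set $S(z) := T(z)\begin{bmatrix} M_E & M_F\end{bmatrix}$. Since $\begin{bmatrix} M_E & M_F\end{bmatrix}$ is a constant matrix, $S(z)$ inherits the degree bound $S(z) = S_0 + S_1 z$ from $T(z)$, the first block column of the equation in (iv) is precisely \eqref{intermediate}, and the second block column $S(z)\begin{bmatrix} -E \\ -F\end{bmatrix} = 0$ holds automatically because $\begin{bmatrix} M_E & M_F\end{bmatrix}$ annihilates $\begin{bmatrix} E^\top & F^\top\end{bmatrix}^\top$. Conversely, assuming (iv), the relation $S(z)\begin{bmatrix} E \\ F\end{bmatrix} = 0$ makes $S(z)$ a polynomial left annihilator of $\begin{bmatrix} E^\top & F^\top\end{bmatrix}^\top$, so the MLA property gives $S(z) = T(z)\begin{bmatrix} M_E & M_F\end{bmatrix}$ for some polynomial matrix $T(z)$; because $\begin{bmatrix} M_E & M_F\end{bmatrix}$ is constant and of full row rank, it admits a constant right inverse $N$, whence $T(z) = S(z)N = S_0 N + z\, S_1 N$ is of degree at most one and \eqref{intermediate} is recovered. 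The subtle point I expect to be the main obstacle is precisely this degree-preservation step: a priori, factoring through an MLA could inflate the degree of $T(z)$, but the fact that the MLA here is a \emph{constant} matrix (since $\begin{bmatrix} E^\top & F^\top\end{bmatrix}^\top$ has constant full column rank) admitting a constant right inverse resolves the issue cleanly.
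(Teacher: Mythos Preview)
Your proposal is correct and follows essentially the same approach as the paper's own proof: the paper establishes (i)$\Leftrightarrow$(ii) via Propositions \ref{prop_acc} and \ref{prop_uio}, declares (ii)$\Leftrightarrow$(iii) ``straightforward'', handles (iii)$\Leftrightarrow$(iv) exactly as you do (setting $S(z)=T(z)[M_E\ M_F]$ in one direction and factoring each coefficient $S_i$ through the constant MLA in the other), and obtains (i)$\Leftrightarrow$(v) from Theorem \ref{IS_UIO}. Your explicit construction of the remaining UIO matrices in (iii)$\Rightarrow$(ii) and your right-inverse argument for degree preservation in (iv)$\Rightarrow$(iii) simply spell out what the paper leaves implicit.
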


\begin{proof}
{\em (i)} $\Leftrightarrow$ {\em (ii)} It follows immediately from the proof of Proposition \ref{prop_acc} and from Proposition \ref{prop_uio}.
\\
 {\em (ii)} $\Leftrightarrow$ {\em (iii)}  It is straightforward.
 \\
 {\em (iii)} $\Leftrightarrow$ {\em (iv)} 
 If {\em (iii)} holds for some $T(z) = T_0 + T_1z$, by setting
$S(z) = T(z) \begin{bmatrix} M_E & M_F\end{bmatrix}$ we immediately deduce {\em (iv)}. Conversely, if 
{\em (iv)} holds, then 
$$S(z) \begin{bmatrix}
E \\
 F 
\end{bmatrix}  =  0,$$
which immediately implies that $S_i= T_i   \begin{bmatrix} M_E & M_F\end{bmatrix}$, for some $T_i, i\in \{0,1\}$.
This immediately proves   {\em (iii)}.
\\
{\em (i)} $\Leftrightarrow$ {\em (v)} It follows immediately from Theorem \ref{IS_UIO}.
 
\end{proof}


\begin{remark} \label{darouach_comp}
It is possible to prove (but details are omitted, due to page constraints) that the conditions given in Theorem \ref{CNS_UIO}  are equivalent to those obtained in \cite{Darouach2} using only algebraic manipulations on the state-space models, namely 
\begin{itemize}
\smallskip
\item[(a)] $\rank{\left( \begin{bmatrix}
zI_n-A & -E \\
C & F 
\end{bmatrix}\right)} = n + \rank{\left( \begin{bmatrix}
E \\
F
\end{bmatrix} \right)}= n+ r$,  $\forall z \in \mathbb C, |z|\ge 1$;
\smallskip
\item[(b)] $\rank{\left( \begin{bmatrix}
CE & F \\
F & 0
\end{bmatrix}\right)} = \rank{(F)} + \rank{\left( \begin{bmatrix}
E \\
F
\end{bmatrix} \right)}= \rank{(F)}+r$.
\end{itemize}
A system $\Sigma$ satisfying (a) and (b) is strong* detectable (see \cite[Theorem 2]{Darouach2}).  
\end{remark}

\begin{remark}
It is worth remarking that 
the analysis carried on in this section could not be deduced from the   purely behavioral one carried on in 
\cite{MB-MEV-JCW}, where there was no constraint that the derived solution could be realised through a state-space model.
\end{remark}
\smallskip

\section{Data-driven approach to the design of a UIO} \label{datadriven}
In this section we tackle   Problem 1 in a data-driven framework. As already done in \cite{TAC-UIO,Ferrari-Trecate}, we assume to have performed an \emph{offline} experiment collecting some state-input-output trajectories in the time interval $[0,T-1]$ with $T\in\mathbb{Z}_+$ and we define the following sequences of vectors
 $x_d =  \{x_d(t)\}_{t=0}^{T-1}$, $u_d =  \{u_d(t)\}_{t=0}^{T-1}$ and $y_d = \{y_d(t)\}_{t=0}^{T-1}$,
that we call \emph{historical} data. 
The assumption that the  state is available for measurements during the preliminary phase of data collection has proved to be necessary for the design of a UIO, since input/output data alone would not  allow one to identify the dimension and the basis of the state-space.  Moreover, 
an {\em ad hoc} preliminary test can be conceived, possibly involving additional sensors, to make the state measurement  possible, while 
it may not be realistic to use such sensors in standard working conditions. 
This justifies the fact that we assume to be able to measure the state offline, but not online.
More on this issue can be found in \cite{TAC-UIO,Ferrari-Trecate}.
In addition, although  we have no access to the unknown input $d(t)$, for the subsequent analysis it will be useful to introduce a symbol for the sequence of historical unknown input data, i.e., 
  $d_d = \{d_d(t)\}_{t=0}^{T-1}$. 
We rearrange the above data into the following matrices: 
\begin{eqnarray*}
X_p &\triangleq& \begin{bmatrix}
x_d(0) & \dots & x_d(T-2)
\end{bmatrix} \in {\mathbb R}^{n \times (T-1)}, \\
X_f &\triangleq &\begin{bmatrix}
x_d(1) & \dots & x_d(T-1)
\end{bmatrix} \in {\mathbb R}^{n \times (T-1)}, \\
U_p &\triangleq &\begin{bmatrix}
u_d(0) & \dots & u_d(T-2)
\end{bmatrix} \in {\mathbb R}^{m \times (T-1)}, \\
U_f &\triangleq &\begin{bmatrix}
u_d(1) & \dots & u_d(T-1)
\end{bmatrix} \in {\mathbb R}^{m \times (T-1)}, \\
Y_p &\triangleq &\begin{bmatrix}
y_d(0) & \dots & y_d(T-2)
\end{bmatrix}\in {\mathbb R}^{p \times (T-1)}, \\
Y_f &\triangleq &\begin{bmatrix}
y_d(1) & \dots & y_d(T-1)
\end{bmatrix} \in {\mathbb R}^{p \times (T-1)}, \\
D_p &\triangleq& \begin{bmatrix}
d_d(0) & \dots & d_d(T-2)
\end{bmatrix} \in {\mathbb R}^{r \times (T-1)}, \\
D_f &\triangleq& \begin{bmatrix}
d_d(1) & \dots & d_d(T-1)
\end{bmatrix} \in {\mathbb R}^{r \times (T-1)},
\end{eqnarray*}
where the subscripts $p$ and $f$ stand for past and future, respectively. 
We also introduce the following notation:
$$
{\footnotesize\Phi_d \triangleq \begin{bmatrix}
X_p^\top &\!\!
X_f ^\top &\!\!
U_p ^\top &\!\!
U_f ^\top &\!\!
Y_p^\top &\!\!
Y_f ^\top
\end{bmatrix}^\top\!\! \in {\mathbb R}^{2(n+m+p)\times (T-1)}.}
$$
Whenever we deal with data-driven techniques, we know that it is important for the data to be representative of the system that has generated them. More formally, we give the following definition,  analogous to those given in  \cite{TAC-UIO,Ferrari-Trecate}. 

\begin{definition} \label{compatibility}
A trajectory $(x,u,y)\in ({\mathbb R}^{n+m+p})^{{\mathbb Z}_+}$ is said to be {\em compatible with the historical data} $(\{x_d(t)\}_{t=0}^{T-1},\{u_d(t)\}_{t=0}^{T-1},\{y_d(t)\}_{t=0}^{T-1})$ if 
\be \label{image}
\begin{bmatrix}
x(t) \\
x(t+1) \\
u(t) \\
u(t+1) \\
y(t) \\
y(t+1) 
\end{bmatrix} \in {\rm Im}(\Phi_d), \quad \forall t \in \mathbb Z_+.
\ee
\end{definition}

In the sequel we assume that the data satisfy the following:

\smallskip
\noindent {\bf Assumption:}
The matrix $\begin{bmatrix}
X_p^\top &
U_p^\top &
U_f^\top &
D_p^\top & 
D_f^\top 
\end{bmatrix}^\top$ is of full row rank.
\smallskip 

  It worth highlighting that, even if the previous assumption cannot be checked in practice using the available data (that do not include the unknown input data), it is satisfied, for instance, under the hypothesis that system $\Sigma$ is controllable and the  (known and unknown) inputs  are persistently exciting of order $n+2$ \cite{WillemsPE}. The known input can always be chosen to this purpose. Instead, for what concerns the unknown input, the requirement is still not verifiable. However, if $T$ is sufficiently large, it is reasonable to assume that 
the   disturbance has this property, since it typically  changes randomly (see also Remark 6 in \cite{TAC-UIO} and Remark 4 in \cite{Ferrari-Trecate}).
 
Under the previous Assumption, by adapting the analysis carried out in \cite{TAC-UIO,Ferrari-Trecate} to the more complex system in \eqref{system}, it is possible to prove 
that the (state/input/output)  trajectories of $\Sigma$ coincide with those
 compatible  with the historical data. More formally, upon defining   
$$
\mathbb T_c \triangleq  \{(x,u,y)\in ({\mathbb R}^{n+m+p})^{{\mathbb Z}_+}
\ {\rm satisfying}\ \eqref{image} \},
$$
we can state the following result. 

\begin{proposition} \label{compat}
Under the Assumption on the data, the state/input/output trajectories compatible with the historical data are all and only those generated by the system, i.e.,  
$$
\mathbb T_c \equiv \mathcal P_{(x,u,y)}\mathcal B_{\Sigma}. 
$$ 
\end{proposition}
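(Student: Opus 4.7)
The plan is to prove the two inclusions $\mathcal{P}_{(x,u,y)}\mathcal{B}_\Sigma \subseteq \mathbb{T}_c$ and $\mathbb{T}_c \subseteq \mathcal{P}_{(x,u,y)}\mathcal{B}_\Sigma$ by leveraging two basic facts. First, the full row rank Assumption means that the linear map $\alpha \mapsto [X_p^\top\;U_p^\top\;U_f^\top\;D_p^\top\;D_f^\top]^\top \alpha$ is surjective onto $\mathbb{R}^{n+2m+2r}$, so every choice of the $(x,u,u',d,d')$ blocks can be attained by some $\alpha \in \mathbb{R}^{T-1}$. Second, the historical data, being trajectories of $\Sigma$, satisfy the data identities
$$
X_f = A X_p + B U_p + E D_p, \quad Y_p = C X_p + D U_p + F D_p, \quad Y_f = C X_f + D U_f + F D_f,
$$
which in particular express the three blocks of $\Phi_d$ \emph{not} appearing in the Assumption as linear combinations of those that do.

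For the inclusion $\mathcal{P}_{(x,u,y)}\mathcal{B}_\Sigma \subseteq \mathbb{T}_c$, I would take any $(x,u,y)\in \mathcal{P}_{(x,u,y)}\mathcal{B}_\Sigma$ together with some $d$ making $(x,u,y,d)\in\mathcal{B}_\Sigma$. At each $t\in\mathbb{Z}_+$ the Assumption delivers $\alpha_t\in\mathbb{R}^{T-1}$ such that the blocks $X_p, U_p, U_f, D_p, D_f$ applied to $\alpha_t$ reproduce $x(t), u(t), u(t+1), d(t), d(t+1)$. The three identities above, combined with the system equations $x(t+1) = Ax(t)+Bu(t)+Ed(t)$, $y(t) = Cx(t)+Du(t)+Fd(t)$, and their shifted versions at $t+1$, then immediately give $X_f\alpha_t = x(t+1)$, $Y_p\alpha_t = y(t)$, and $Y_f\alpha_t = y(t+1)$, so $\Phi_d\alpha_t$ matches the full column in \eqref{image}.

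For the reverse inclusion, I would take $(x,u,y)\in\mathbb{T}_c$ and, at each time $t$, choose $\alpha_t$ with $\Phi_d\alpha_t$ equal to the stacked vector in \eqref{image}. I would then simply define $d(t) \triangleq D_p\alpha_t$ and verify, via the first two identities above, that $x(t+1) = X_f\alpha_t = Ax(t)+Bu(t)+Ed(t)$ and $y(t) = Y_p\alpha_t = Cx(t)+Du(t)+Fd(t)$, which shows that $(x,u,y,d)\in\mathcal{B}_\Sigma$ and hence $(x,u,y)\in \mathcal{P}_{(x,u,y)}\mathcal{B}_\Sigma$.

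The step that looks most delicate, and that deserves explicit comment in the writeup, is the consistency of the pointwise construction of $d$ in the reverse inclusion. Because the $\alpha_t$ are chosen independently at each time, one might worry that the value $D_p\alpha_{t+1}$ used to define $d(t+1)$ conflicts with $D_f\alpha_t$, which also represents the disturbance at time $t+1$. The resolution is that the two state and output equations at time $t$ only involve $d(t)$, not $d(t+1)$, so no matching between $D_p\alpha_{t+1}$ and $D_f\alpha_t$ is needed; the pointwise definition suffices. Everything else reduces to direct linear-algebraic verification using the identities listed above.
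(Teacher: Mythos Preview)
Your proposal is correct. The paper does not actually supply a proof of this proposition: it states the result and refers the reader to \cite{TAC-UIO,Ferrari-Trecate}, noting that their analysis can be adapted to the present setting where the disturbance also enters the output equation. Your argument is exactly that adaptation, carried out in full: the three data identities $X_f = AX_p+BU_p+ED_p$, $Y_p = CX_p+DU_p+FD_p$, $Y_f = CX_f+DU_f+FD_f$, combined with the surjectivity granted by the rank Assumption, are precisely what is needed for both inclusions. Your observation about the consistency of the pointwise definition of $d(t)$ is also correct and worth including, since the definition of $\mathcal{B}_\Sigma$ via $R_\Sigma(\sigma)$ couples $d(t)$ only to $x(t)$, $x(t+1)$, $u(t)$, and $y(t)$, so no matching of $D_p\alpha_{t+1}$ with $D_f\alpha_t$ is required.
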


%
\medskip

Theorem \ref{ker_incl}, below, provides necessary and sufficient conditions, based on    data, 
for the existence of a UIO. 
\smallskip

\begin{theorem}\label{ker_incl}
 There exists a UIO of the form \eqref{UIO.eq} for the system $\Sigma$ in \eqref{system} if and only if 
 for every choice of    real matrices $V_p, V_f, W_p,  W_f, R_p$ and $R_f$ of suitable sizes such that
\be
\label{ker_im}
\!\!\! 
\ker \left(  \begin{bmatrix} V_p & V_f & W_p & W_f & R_p & R_f \end{bmatrix} \right) = {\rm Im}(\Phi_d), \!
\ee
 there exists a real matrix $\Omega$ such that
\be \Omega \begin{bmatrix} V_p & V_f\end{bmatrix} = \begin{bmatrix}- A^* & I_n\end{bmatrix},
\label{sol_cond}
\ee
where $A^*\in {\mathbb R}^{n\times n}$ is a Schur matrix.
\end{theorem}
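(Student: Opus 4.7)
The plan is to reduce the data-driven condition \eqref{sol_cond} to the equivalent model-based condition (iv) in Theorem \ref{CNS_UIO}, by first describing $\mathrm{Im}(\Phi_d)^\perp$ algebraically and then producing a distinguished matrix in it whose first two blocks are $[-A^\ast,\ I_n]$.

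\textbf{Step 1: algebraic description of $\mathrm{Im}(\Phi_d)^\perp$.} Substituting $X_f = AX_p+BU_p+ED_p$, $Y_p = CX_p+DU_p+FD_p$ and $Y_f = CAX_p+CBU_p+CED_p+DU_f+FD_f$, I would factor $\Phi_d = M\cdot [X_p^\top\ U_p^\top\ U_f^\top\ D_p^\top\ D_f^\top]^\top$ for an explicit matrix $M$. Under the Assumption the right factor is right-invertible, hence $\mathrm{Im}(\Phi_d)=\mathrm{Im}(M)$. A column-by-column computation of $[V_p\ V_f\ W_p\ W_f\ R_p\ R_f]M=0$ then shows that a matrix has rows in $\mathrm{Im}(\Phi_d)^\perp$ if and only if
\begin{align*}
&V_p+V_fA+R_pC+R_fCA=0,\quad V_fB+W_p+R_pD+R_fCB=0,\\
&W_f+R_fD=0,\quad V_fE+R_pF+R_fCE=0,\quad R_fF=0.
\end{align*}
In particular, \eqref{ker_im} is equivalent to saying that the rows of $[V_p\ V_f\ W_p\ W_f\ R_p\ R_f]$ span $\mathrm{Im}(\Phi_d)^\perp$, so the ``every choice'' in the statement is harmless.

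\textbf{Step 2: algebraic rewriting of Theorem \ref{CNS_UIO}(iv).} Expanding $S(z)=S_0+S_1z$ in condition (iv), the coefficient of $z^2$ forces the first $n$ columns of $S_1$ to vanish. Writing $S_0=[S_0^x\ S_0^y]$, $S_1=[0\ S_1^y]$ and matching the remaining powers of $z$ shows that condition (iv) is equivalent to the existence of $S_0^y,S_1^y\in\mathbb R^{n\times p}$ with $A^\ast:=A+S_1^y CA+S_0^yC$ Schur, $S_0^yF+S_1^yCE=-E$ and $S_1^yF=0$ (with $S_0^x=I+S_1^yC$ automatic).

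\textbf{Step 3: correspondence and conclusion.} Given such $S_0^y,S_1^y$, I would define the $n\times 2(n+m+p)$ matrix
\[
P:=\bigl[\,-A^\ast,\ I_n,\ -(I+S_1^yC)B-S_0^yD,\ -S_1^yD,\ S_0^y,\ S_1^y\,\bigr]
\]
and verify directly that it satisfies the five equations of Step 1, hence its rows lie in $\mathrm{Im}(\Phi_d)^\perp$. Conversely, any $n$-row matrix in $\mathrm{Im}(\Phi_d)^\perp$ whose first two blocks equal $[-A^\ast\ \ I_n]$ has last two blocks of the form $[S_0^y\ \ S_1^y]$, and equations (i), (iv), (v) of Step 1 then force exactly the conditions of Step 2, yielding a UIO through Theorem \ref{CNS_UIO}(iv). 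Since the rows of $[V_p\ V_f\ W_p\ W_f\ R_p\ R_f]$ span $\mathrm{Im}(\Phi_d)^\perp$, the existence of such a $P$ is equivalent to the existence of $\Omega$ with $\Omega[V_p\ V_f\ W_p\ W_f\ R_p\ R_f]=P$, and in particular with $\Omega[V_p\ V_f]=[-A^\ast\ I_n]$; this is exactly \eqref{sol_cond}.

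\textbf{Main obstacle.} The delicate point is Step 1: showing that the five equations \emph{characterize} (not merely necessitate) membership in $\mathrm{Im}(\Phi_d)^\perp$. This is what the Assumption buys us, via right-invertibility of the data matrix $[X_p^\top\ U_p^\top\ U_f^\top\ D_p^\top\ D_f^\top]^\top$. The rest is linear-algebraic bookkeeping through the blocks $S_0^x,S_0^y,S_1^y$ and the six sections of $P$, which is routine but error-prone.
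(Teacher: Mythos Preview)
Your argument is correct, but it takes a genuinely different route from the paper's proof. The paper does not compute $\mathrm{Im}(\Phi_d)^\perp$ explicitly nor reduce to Theorem~\ref{CNS_UIO}(iv). Instead, for the ``only if'' direction it observes that the existence of a UIO means the projected behavior $\mathcal P_{(\hat x,u,y)}\mathcal B_{\hat\Sigma}$ (whose kernel description is \eqref{proj_stim}) contains $\mathcal P_{(x,u,y)}\mathcal B_{\Sigma}$; applying this kernel matrix to the historical data columns directly produces a row matrix annihilating $\Phi_d$ with first two blocks $[-A_{UIO}\ I_n]$, whence the existence of $\Omega$. For the ``if'' direction the paper defines $[-A^*\ I_n\ -S_3\ -S_4\ -S_5\ -S_6]\triangleq\Omega[V_p\ \dots\ R_f]$, notes that its kernel contains $\mathrm{Im}(\Phi_d)$, invokes Proposition~\ref{compat} to conclude that every trajectory of $\Sigma$ is annihilated, and then \emph{builds} the UIO matrices from $A^*,S_3,\dots,S_6$ via \eqref{Si}, checking the acceptor inclusion \eqref{P_incl} directly.

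In short: you push everything through the algebraic identity $\mathrm{Im}(\Phi_d)=\mathrm{Im}(M)$ and land back on the model-based condition (iv), whereas the paper stays in the behavioral/data-driven language throughout and never re-invokes Theorem~\ref{CNS_UIO}. Your approach is cleaner as a reduction and makes the role of the Assumption very transparent (it is exactly what turns the five equations into a characterization rather than a necessary condition). The paper's approach, by contrast, is constructive in a way yours is not: its ``if'' direction produces the UIO matrices directly from the data-derived blocks $S_3,\dots,S_6$, which is what feeds into Remark~\ref{practical} and the numerical example. Your route proves existence via Theorem~\ref{CNS_UIO}(iv) but would need an extra step (reading off $D_{UIO}^y,B_{UIO}^y$ from $S_0^y,S_1^y$ and then $B_{UIO}^u,D_{UIO}^u$ from the remaining blocks of $P$) to recover the same explicit design.
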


\begin{proof}
{\em Only if.} Suppose that there exists a  UIO, $\hat \Sigma$,  for system $\Sigma$ in \eqref{system}, described as  in \eqref{UIO.eq}. This means that 
 $\mathcal P_{(x,u,y)}\mathcal B_{\Sigma} \  \subseteq \ \mathcal P_{(\hat x,u,y)}\mathcal B_{\hat \Sigma}$ and that $A_{UIO}$ is Schur stable. Therefore,  
 if $w = (x,u,y)$ is a trajectory belonging to $\mathcal P_{(x,u,y)}\mathcal B_{\Sigma}$, it also belongs to $ \mathcal P_{(\hat x,u,y)}\mathcal B_{\hat \Sigma}$. 
 Since the historical data have been generated by system $\Sigma$, this in particular implies that $(\{x_d(t)\}_{t=0}^{T-1},(\{u_d(t)\}_{t=0}^{T-1},(\{y_d(t)\}_{t=0}^{T-1})\in\mathcal P_{(\hat x,u,y)}\mathcal B_{\hat \Sigma}\big|_{[0,T-1]} $ and thus for every $t\in[0,T-2]$ we have 
 \begin{eqnarray*}
\!\!\!\!\!\!\!\!\!&& \big[\begin{array}{c|c|}
\sigma I_n - A_{UIO}  & -B_{UIO}^u 
- (\sigma I_n - A_{UIO}) D_{UIO}^u 
\end{array} \\
\!\!\!\!\!\!\!\!\!&&\begin{array}{c}
 -B_{UIO}^y - (\sigma I_n - A_{UIO})D_{UIO}^y
\end{array}\big] \begin{bmatrix}
x_d(t) \\
u_d(t) \\
y_d(t) 
\end{bmatrix} = 0,
\end{eqnarray*}
and therefore
\begin{eqnarray}
\!\!\!\!\!\!\!\!\!&& \big[\begin{array}{c|c|c|c|}
- A_{UIO}  &  I_n &   -B_{UIO}^u+A_{UIO}D_{UIO}^u & -D_{UIO}^u \nonumber
\end{array}\ \  \ \ \\
\!\!\!\!\!\!\!\!\!&&\begin{array}{c|c}
 - B_{UIO}^y+A_{UIO}D_{UIO}^y & -D_{UIO}^y 
\end{array}\big] \Phi_d = 0. \label{ortog}
\end{eqnarray}
This means that ${\rm Im}(\Phi_d)$ is included in the kernel of the matrix appearing on the left hand-side in \eqref{ortog}.
If we select any  matrix $\begin{bmatrix} V_p & V_f & W_p & W_f & R_p & R_f \end{bmatrix}$ such that \eqref{ker_im} holds,
then there exists  a matrix $\Omega$ of suitable size such that
\begin{eqnarray*}
\!\!\!\!\!\!\!\!\!&&\big[\begin{array}{c|c|c|c|}
- A_{UIO}  &  I_n &   -B_{UIO}^u+A_{UIO}D_{UIO}^u & -D_{UIO}^u
\end{array} \\
&&\begin{array}{c|c}
  - B_{UIO}^y+A_{UIO}D_{UIO}^y & -D_{UIO}^y 
\end{array}\big] =\\
&&\ \ \ \  = \Omega \begin{bmatrix} V_p & V_f & W_p & W_f & R_p & R_f \end{bmatrix}.
\end{eqnarray*}
This implies 
 that \eqref{sol_cond} holds for $A^*= A_{UIO}$.
\medskip

{\em If.} Let $V_p, V_f, W_p,  W_f, R_p$ and $R_f$ be real matrices  of suitable sizes such that 
 \eqref{ker_im} holds and suppose that there exists a real matrix $\Omega$ such that
\eqref{sol_cond} holds for some Schur stable matrix $A^*$.
Set 
\begin{eqnarray}
&&\!\!\!\!\!\!  \begin{bmatrix} - A^* & I_n &  - S_3 & -S_4 & -S_5 & -S_6 \end{bmatrix}  \nonumber \\
&& \ \ \ \ \ \ \  \triangleq \Omega \begin{bmatrix} V_p & V_f & W_p & W_f & R_p & R_f \end{bmatrix}.
\label{multiplo} \end{eqnarray}
Clearly, 
\begin{eqnarray*}
&&\ker \left(\begin{bmatrix} - A^*& I_n &  - S_3 & -S_4 & -S_5 & -S_6 \end{bmatrix} \right) \supseteq \\
&& \ker \left(\begin{bmatrix} V_p & V_f & W_p & W_f & R_p & R_f \end{bmatrix}\right) = {\rm Im} (\Phi_d).
\end{eqnarray*}
 We want to show that  there exists a system of the form \eqref{UIO.eq} that is a UIO for system $\Sigma$. 
Under the Assumption we made on the data, we have that  every trajectory $(x,u,y) \in {\mathcal P}_{(x,u,y)}{\mathcal B}_{\Sigma}$ satisfies \eqref{image} 
for every $t\ge 0$,
and hence
$$ \label{conXf}
\begin{bmatrix} - A^*& I_n &  - S_3 & -S_4 & -S_5 & -S_6 \end{bmatrix} 
\begin{bmatrix}
x(t) \\
x(t+1) \\
u(t) \\
u(t+1) \\
y(t) \\
y(t+1) 
\end{bmatrix} = 0.$$
 This implies that the trajectory $(x,u,y) \in {\mathcal P}_{(x,u,y)}{\mathcal B}_{\Sigma}$ belongs to the kernel of the following matrix 
\be \label{zT}
\left[\begin{array}{c|c|c}
 z I_n - A^* & -z S_4 - S_3 & -z S_6 -S_5 
\end{array}\right].
\ee
Upon setting 
\begin{equation} \label{Si}
\begin{array}{l}
 A_{UIO} \triangleq  A^*,   \
D_{UIO}^u \triangleq S_4, \
B_{UIO}^u \triangleq  S_3+A^*S_4\\
 D_{UIO}^y \triangleq S_6, \
B_{UIO}^y  \triangleq S_5+A^*S_6,
\end{array} 
\end{equation}
the polynomial matrix in \eqref{zT} coincides with $\big[\begin{array}{c|c|c}
z I_n - A_{UIO}  & -B_{UIO}^u 
- (z I_n - A_{UIO}) D_{UIO}^u  & -B_{UIO}^y \end{array}
\\ \begin{array}{c}  - (z I_n - A_{UIO})D_{UIO}^y
\end{array}\big].$
Therefore if we select a system $\hat \Sigma$, described as in
\eqref{UIO.eq} for the previous choice of the matrices $A_{UIO}, B_{UIO}^u, B_{UIO}^y, D_{UIO}^u$ and $D_{UIO}^y$,
we know that \eqref{proj_stim} holds, and we have just proved that 
$$ {\mathcal P}_{(x,u,y)}{\mathcal B}_{\Sigma} \subseteq \mathcal P_{(\hat x,u,y)}\mathcal B_{\hat \Sigma}.$$
Consequently, $\hat \Sigma$ is  an acceptor of the form \eqref{UIO.eq} for   $\Sigma$.  Finally, since $A_{UIO}=A^*$ is Schur stable,   $\hat \Sigma$ is a UIO.
 \end{proof}


%
\medskip

\begin{remark} \label{practical}
It is worthwhile noticing that the proof of the previous Theorem \ref{ker_incl} 
provides also an elementary way to practically determine the matrices of a UIO from the historical data, if it exists.
Indeed, it is sufficient to first determine any matrix
$\begin{bmatrix} V_p & V_f & W_p & W_f & R_p & R_f \end{bmatrix}$
such that \eqref{ker_im} holds. 
Then search for a real matrix  $\Omega$, if it exists,  such that  
\eqref{sol_cond} holds.
To this end, we first verify if $V_f$ is of full column rank. If not, the problem is not solvable. If $\rank(V_f)= n$, we can parametrize the set of solutions of $I_n = \Omega V_f$ as
$\Omega = \bar \Omega + L \Delta_f,$
where $\bar \Omega$ is a specific left inverse of $V_f$, $\Delta_f$ is a full row rank matrix such that ${\rm ker}(\Delta_f)= {\rm Im} (V_f)$, and $L$ is a free parameter.
Once we set
$\bar A \triangleq  \bar \Omega V_p,$ and $\bar C \triangleq \Delta_f V_p,$
we   reduce ourselves to the problem of finding $L$ such that 
$\bar A + L \bar C$ is Schur stable.\\
If such $L$ exists, then, by making use of \eqref{multiplo} and \eqref{Si}, one deduces the matrices of a possible UIO.
\end{remark}
 \medskip

\section{Example} \label{example}

As a proof of concept,  we now  illustrate the results obtained in the previous sections by means of an elementary numerical example.

\begin{example} \label{ex2}
Consider a system $\Sigma$  of order $n=3$ described as in \eqref{system} for the following choice of matrices: 
\begin{align*}
A  &= \begin{bmatrix}   
     1  &   1   & -1 \\
     2   &  1   &  1 \\
     1   &  0   & -1
\end{bmatrix}, \quad
B = \begin{bmatrix}
  -1 \\
  1 \\
  1
\end{bmatrix}, \quad
 E = \begin{bmatrix}
  1 \\
   0 \\
   1
   \end{bmatrix}, \\ 
C &= \begin{bmatrix}
 1   &  1 &    0 \\
     1   & -1   &  1
     \end{bmatrix}, \quad
D = \begin{bmatrix}
     2  \\
     1   
     \end{bmatrix}, \quad
F = \begin{bmatrix}
 1 \\
 1  
     \end{bmatrix}.
\end{align*}
Historical inputs data are randomly generated, uniformly in the interval $(-4,4)$ for the known input $u(t)$, and in the interval $(-3,3)$ for the disturbance $d(t)$. The time interval of the offline experiment is $T=11$. We collect the data corresponding to the state/input/output trajectories generated by the system driven by the previous inputs and with random initial condition. We group them into the matrix $\Phi_d$. Then, we determine a matrix $\Psi \triangleq \begin{bmatrix} V_p & V_f & W_p & W_f & R_p & R_f \end{bmatrix}$ such that \eqref{ker_im} holds. A possible choice is: 
\begin{eqnarray*}
\Psi \!\!\!\!&=&\!\!\!\! \begin{bmatrix} V_p & V_f & W_p & W_f & R_p & R_f \end{bmatrix} \\
 \!\!\!\!&=&\!\!\!\! \left[\begin{array}{ccc|ccc|c|c|cc|cc}
4  &  3  & 2 &  -1 &  -2  & 1 &  0 & 0 &  0 & 0 &  0 & 0 \\
2  &  1 & 1  &  0 &  -1 & 0  &  1  & 0 &  0 & 0 &  0 & 0 \\
6   & 3  &  2  & -1  & -3&  0  & 0 &  0 &  1 & 0 &  0 & 0 \\
4  &  4  & 0 & -1  &-2  & 0 & 0  & 0 &  0 & 1 &  0 & 0 \\
4  & 3  &  2  & -1 &  0 &  0 & 0 &   1 &  0 & 0 &  -1 & 1
\end{array}\right].
\end{eqnarray*}
Once we have determined $\Psi$, we look for a matrix $\Omega$ that solves \eqref{sol_cond} with $A^*$ Schur stable. To this end, we follow the procedure proposed in Remark \ref{practical}. We verify that $V_f$ has full column rank and then we compute $\bar A$ and $\bar C$, as described in the remark, obtaining: 
\begin{eqnarray*}
\bar A &=& \begin{bmatrix}
   -3.2941 &  -2.9412  & -1.2353 \\
   -0.8235  & -0.2353  & -0.0588 \\
   -0.9412 &  -0.4118  &  0.6471
\end{bmatrix}, \\
\bar C &=& \begin{bmatrix}
   -0.9378  &  0.8800  & -1.4951 \\
    1.3943  &  0.7607 &   1.1882
\end{bmatrix}.
\end{eqnarray*}
It is possible to show that the pair $(\bar A,\bar C)$ is observable and hence there exists a matrix $L$ such that $\bar A + L \bar C$ is Schur stable. For instance, if we impose 
$$
L = \begin{bmatrix}
    1.1351 &   2.8592 \\
    0.0810 &   0.4450 \\
    0.3964  &  0.5414 
\end{bmatrix},
$$
we obtain 
$$
A^* = A_{UIO} =  -(\bar A +L \bar C) = 
\begin{bmatrix}
    0.3721 &  -0.2326  & -0.4651 \\
    0.2791 &  -0.1744  & -0.3488 \\
    0.5581 &  -0.3488  &  -0.6977
\end{bmatrix},
$$
whose eigenvalues are $0,0$ and $0.5$. \\
The corresponding $\Omega$ is: 
$$
\Omega = \begin{bmatrix}
   0  & 2.9767 &  -1.1628 &   0.2558 &  -0.0930 \\
  0 &   0.2326  & -0.3721  & -0.0581  &  0.4302 \\
    1   & 0.4651  & -0.7442  & -0.1163 &  -0.1395
\end{bmatrix}.
$$
Finally, by making use of \eqref{multiplo} and \eqref{Si}, we deduce also the other matrices of the obtained UIO:
\begin{align*}
& B_{UIO}^u = \begin{bmatrix}
   -2.9070 \\
   -0.1802 \\
   -0.3605
\end{bmatrix},   B_{UIO}^y = \begin{bmatrix}
    1.0930 &  -0.1860 \\
    0.3198  &  0.1105 \\
    0.6395   & 0.2209
   \end{bmatrix}, \\
&D_{UIO}^u = \begin{bmatrix}
    0.0930 \\
   -0.4302 \\
    0.1395
     \end{bmatrix},  
D_{UIO}^y = \begin{bmatrix}
   -0.0930   & 0.0930 \\
    0.4302  & -0.4302 \\
   -0.1395  &  0.1395
     \end{bmatrix}.
\end{align*}
The dynamics of the state estimation error is illustrated in Figure \ref{fig2}. 
\begin{figure} 
\centering
\includegraphics[width = 0.49\textwidth]{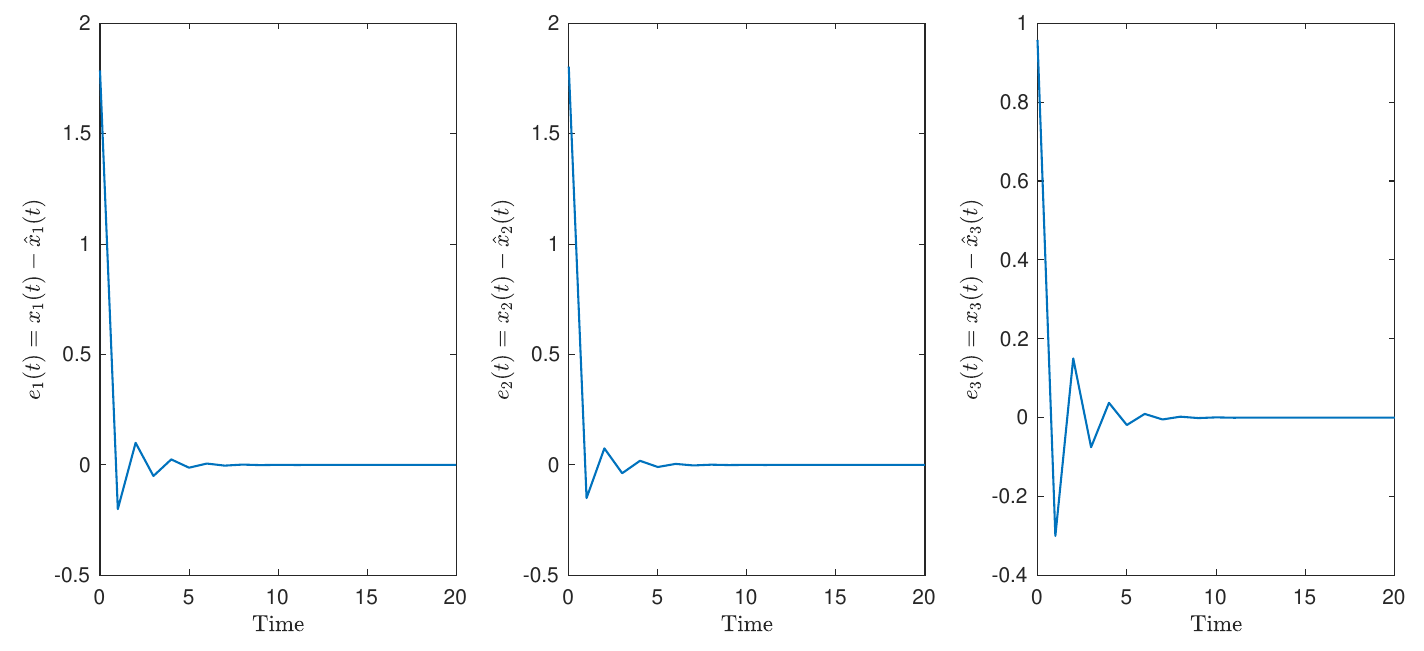}
\caption{Dynamics of the state estimation error} 
\label{fig2}
\end{figure} 
\end{example}
\medskip

\section{Conclusions} \label{Concl} 

In this paper we have proposed a novel strategy, based on Willems' behavioral approach,  to address the design of    unknown-input observers for   linear time-invariant discrete-time state-space models, subject to unknown disturbances. The problem is first addressed assuming that the original system model is known, and later assuming that the model is unknown, but historical data satisfying a certain assumption are available. 
By leveraging fundamental concepts  as the projection of a behavior, the inclusion of a behavior in another one, 
and the use of kernel and image representations, 
we have been able to formalize and solve the UIO design problem.
It is interesting to underline that this seemingly more theoretical approach has the potential to be generalized to more complicated set-ups,
and   provides, as a useful byproduct,  an easy algorithm to design the matrices of a UIO starting from data,
while the solutions proposed in \cite{TAC-UIO,ECC2024} 
are more difficult to implement.
 \medskip

\bibliographystyle{plain}

\bibliography{
Refer177}

\end{document}